\newtheorem{theorem}{Theorem}
\newtheorem{proposition}[theorem]{Proposition}
\newtheorem{definition}[theorem]{Definition}
\newtheorem{example}[theorem]{Example}
\DeclareMathOperator*{\divergenz}{div}              %
\DeclareMathOperator*{\Ss}{S}
\newcommand{\R}{\mathbb{R}}
\newcommand{\Lp}[1]{L^{#1}(\Omega)}
\newcommand{\Wp}[1]{W^{1,#1}(\Omega)}
\newcommand{\Wpzero}[1]{W^{1,#1}_0(\Omega)}
\newcommand{\ran}{\rangle}
\newcommand{\ph}{\varphi}
\newcommand{\into}{\int_{\Omega}}
\newcommand{\weak}{\rightharpoonup}
\newcommand{\close}{\overline{\Omega}}
\numberwithin{theorem}{section}
\numberwithin{equation}{section}
\title[Double phase problems with convection term]{Existence and uniqueness results for double phase problems with convection term}
\author[L.\,Gasi\'nski]{Leszek Gasi\'nski}
\address[L.\,Gasi\'nski]{Pedagogical University of Cracow, Department of Mathematics, Podchorazych 2, 30-084 Cracow, Poland}
\email{leszek.gasinski@up.krakow.pl}
\author[P.\,Winkert]{Patrick Winkert}
\address[P.\,Winkert]{Technische Universit\"{a}t Berlin, Institut f\"{u}r Mathematik, Stra\ss e des 17.\,Juni 136, 10623 Berlin, Germany}
\email{winkert@math.tu-berlin.de}
\subjclass[2010]{35J15, 35J62, 35J92, 35P30}
\keywords{Double phase problems, convection term, pseudomonotone operators, existence results, uniqueness}
\begin{document}

\begin{abstract}
    In this paper we consider quasilinear elliptic equations with double phase phenomena and a reaction term depending on the gradient. Under quite general assumptions on the convection term we prove the existence of a weak solution by applying the theory of pseudomonotone operators. Imposing some linear conditions on the gradient variable the uniqueness of the solution is obtained.
\end{abstract}
	
\maketitle
	
\section{Introduction}

Given a bounded domain $\Omega \subseteq \R^N$, $N\geq 2$, with Lipschitz boundary $\partial \Omega$ we consider the following double phase problem with convection term
\begin{equation}\label{problem}
    \begin{aligned}
	-\divergenz\left(|\nabla u|^{p-2}\nabla u+\mu(x) |\nabla u|^{q-2}\nabla u\right) & =f(x,u,\nabla u)\quad && \text{in } \Omega,\\
	u & = 0 &&\text{on } \partial \Omega,
    \end{aligned}
\end{equation}
where $1<p<q<N$, the function $\mu \colon \close \to [0,\infty)$ is supposed to be Lipschitz continuous and $f\colon\Omega\times\R\times\R^N\to \R$ is a Carath\'{e}odory function, that is, $x\mapsto f(x,s,\xi)$ is measurable for all $(s,\xi)\in\R\times\R^N$ and $(s,\xi)\mapsto f(x,s,\xi)$ is continuous for a.a.\,$x\in\Omega$, which satisfies appropriate conditions; see hypotheses (H) and (U1), (U2) in Section \ref{section_3}.

The main objective of this paper is to present existence and uniqueness results for problems of type \eqref{problem}. The main novelty of our paper is the combination of both, a double phase operator and a convection term which we describe below. To the best of our knowledge, this is the first paper dealing with both notions.

To be more precise, problem \eqref{problem} combines two interesting phenomena. The first one is the fact that the operator involved in \eqref{problem} is the so-called double phase operator whose behavior switches between two different elliptic situations. In other words, it depends on the values of the function $\mu\colon\close\to [0,\infty)$. Indeed, on the set $\{x\in \Omega: \mu(x)=0\}$ the operator will be controlled by the gradient of order $p$ and in the case $\{x\in \Omega: \mu(x) \neq 0\}$ it is the gradient of order $q$. This is the reason why it is called double phase. Originally, the idea to treat such operators comes from Zhikov \cite{Zhikov-1986}, \cite{Zhikov-1995}, \cite{Zhikov-1997} who introduced such classes to provide models
of strongly anisotropic materials; see also the monograph of Zhikov-Kozlov-Oleinik \cite{Zhikov-Kozlov-Oleinik-1994}. In order to describe this phenomenon, he introduced the functional
\begin{align}\label{integral_minimizer}
    \omega \mapsto \int \left(|\nabla \omega|^p+\mu(x)|\nabla \omega|^q\right)\,dx,
\end{align}
which was intensively studied in the last years. We refer to the papers of Baroni-Colombo-Mingione \cite{Baroni-Colombo-Mingione-2015}, \cite{Baroni-Colombo-Mingione-2016}, \cite{Baroni-Colombo-Mingione-2018}, Baroni-Kussi-Mingione \cite{Baroni-Kussi-Mingione-2015}, Colombo-Mingione \cite{Colombo-Mingione-2015a}, \cite{Colombo-Mingione-2015b} and the references therein concerning the regularity. We also point out that the integrals of the from \eqref{integral_minimizer} arise in the context of functionals with non-standard growth; see the works of Cupini-Marcellini-Mascolo \cite{Cupini-Marcellini-Mascolo-2015} and Marcellini \cite{Marcellini-1989}, \cite{Marcellini-1991}.

Recently, Perera-Squassina \cite{Perera-Squassina-2019} studied double phase problems and stated an existence result which was proved via Morse theory in terms of critical groups. The corresponding eigenvalue problem of the double phase operator with Dirichlet boundary condition was analyzed by Colasuonno-Squassina \cite{Colasuonno-Squassina-2016} who proved the existence and properties of related variational eigenvalues. By applying variational methods, Liu-Dai \cite{Liu-Dai-2018} treated double phase problems and proved existence and multiplicity results.

The second interesting phenomena in our work is the appearance of a nonlinearity on the right-hand side which also depends on the gradient of the solution. Such functions are usually called convection terms. The difficulty with the gradient dependent term is the nonvariational character of the problem. Nevertheless there exists a number of papers concerning existence and multiplicity results. Our starting point in this paper is the work of Averna-Motreanu-Tornatore \cite{Averna-Motreanu-Tornatore-2016} who considered problem \eqref{problem} with a homogeneous Dirichlet boundary condition and the $(p,q)$-Laplacian as defined in \eqref{operator_representation3}; see Section \ref{section_2}.

For other existence results on quasilinear equations with convection term and the $p$-Laplace or the $(p,q)$-Laplace differential operator we refer to the papers of Bai-Gasi\'nski-Papageorgiou \cite{Bai-Gasinski-Papageorgiou-2018}, De Figueiredo-Girardi-Matzeu \cite{De-Figueiredo-Girardi-Matzeu-2004}, Dupaigne-Ghergu-R\u{a}dulescu \cite{Dupaigne-Ghergu-Radulescu-2007}, Faraci-Motreanu-Puglisi \cite{Faraci-Motreanu-Puglisi-2015}, Faria-Miyagaki-Motreanu \cite{Faria-Miyagaki-Motreanu-2014a}, Faria-Miyagaki-Motreanu-Tanaka \cite{Faria-Miyagaki-Motreanu-Tanaka-2014b}, Gasi\'nski-Papageorgiou \cite{Gasinski-Papageorgiou-2017}, Marano-Winkert \cite{Marano-Winkert-2019}, Motreanu-Mo\-tre\-anu-Moussaoui \cite{Motreanu-Motreanu-Moussaoui-2018}, Motreanu-Tanaka \cite{Motreanu-Tanaka-2017}, Motreanu-Tornatore \cite{Motreanu-Tornatore-2017}, Motreanu-Winkert \cite{Motreanu-Winkert-2019}, Ruiz \cite{Ruiz-2004}, Tanaka \cite{Tanaka-2013} and the references therein.

The main idea in the existence theorem of our paper is the usage of the surjectivity result for pseudomonotone operators. This can be realized by an easy condition on the convection term, in addition to the usual growth condition. For the uniqueness result we need much stronger assumptions and narrow down to the most interesting case $2=p<q<N$ which is the generalization of the $(q,2)$-Laplace differential operator; see \eqref{operator_representation3} for its definition.

\section{Preliminaries}\label{section_2}

For $1\leq r<\infty$, we denote by $\Lp{r}$ and $L^r(\Omega;\R^N)$ the usual Lebesgue spaces equipped with the norm $\|\cdot\|_r$. If $1<r<\infty$, then $\Wp{r}$ and $\Wpzero{r}$ stand for the Sobolev spaces endowed with the norms $\|\cdot\|_{1,r}$ and $\|\cdot\|_{1,r,0}$, respectively.

Let $\mathcal{H}\colon \Omega \times [0,\infty)\to [0,\infty)$ be the function
\begin{align*}
    (x,t)\mapsto t^p+\mu(x)t^q
\end{align*}
where $1<p<q<N$ and
\begin{align}\label{condition_poincare}
    \frac{q}{p}<1+\frac{1}{N}, \qquad \mu\colon \close\to [0,\infty) \text{ is Lipschitz continuous}.
\end{align}
The Musielak-Orlicz space $L^\mathcal{H}(\Omega)$ is defined by
\begin{align*}
    L^\mathcal{H}(\Omega)=\left \{u ~ \Big | ~ u: \Omega \to \R \text{ is measurable and } \rho_{\mathcal{H}}(u):=\into \mathcal{H}(x,|u|)\,dx< +\infty \right \}.
\end{align*}
While equipped with the Luxemburg norm
\begin{align*}
    \|u\|_{\mathcal{H}} = \inf \left \{ \tau >0 : \rho_{\mathcal{H}}\left(\frac{u}{\tau}\right) \leq 1  \right \},
\end{align*}
the space $L^\mathcal{H}(\Omega)$ becomes uniformly convex and so a reflexive Banach space. Furthermore we define
\begin{align*}
    L^q_\mu(\Omega)=\left \{u ~ \Big | ~ u: \Omega \to \R \text{ is measurable and } \into \mu(x) | u|^q \,dx< +\infty \right \}
\end{align*}
and endow it with the seminorm
\begin{align*}
    \|u\|_{q,\mu} = \left(\into \mu(x) |u|^q \,dx \right)^{\frac{1}{q}}.
\end{align*}
From Colasuonno-Squassina \cite[Proposition 2.15]{Colasuonno-Squassina-2016} we have the continuous embeddings
\begin{align*}
    \Lp{q} \hookrightarrow L^\mathcal{H}(\Omega) \hookrightarrow \Lp{p} \cap L^q_\mu(\Omega).
\end{align*}
For $u\neq 0$ we see that $\rho_\mathcal{H}(\frac{u}{\|u\|_\mathcal{H}})=1$ and so, it follows that
\begin{align}\label{inequality_lp}
    \min \left\{\|u\|_\mathcal{H}^p,\|u\|_\mathcal{H}^q \right\} \leq \|u\|_p^p+\|u\|^q_{q,\mu}
    \leq \max\left \{\|u\|_\mathcal{H}^p,\|u\|_\mathcal{H}^q\right\}
\end{align}
for all $u\in L^\mathcal{H}(\Omega)$. The corresponding Sobolev space $W^{1,\mathcal{H}}(\Omega)$ is defined by
\begin{align*}
    W^{1,\mathcal{H}}(\Omega)= \left \{u \in L^\mathcal{H}(\Omega) : |\nabla u| \in L^{\mathcal{H}}(\Omega) \right\}
\end{align*}
with the norm
\begin{align*}
    \|u\|_{1,\mathcal{H}}= \|\nabla u \|_{\mathcal{H}}+\|u\|_{\mathcal{H}}
\end{align*}
where $\|\nabla u\|_\mathcal{H}=\||\nabla u|\|_{\mathcal{H}}$.

By $W^{1,\mathcal{H}}_0(\Omega)$ we denote the completion of $C^\infty_0(\Omega)$ in $W^{1,\mathcal{H}}(\Omega)$ and thanks to \eqref{condition_poincare} we have an equivalent norm on $W^{1,\mathcal{H}}_0(\Omega)$ given by
\begin{align*}
    \|u\|_{1,\mathcal{H},0}=\|\nabla u\|_{\mathcal{H}},
\end{align*}
see Colasuonno-Squassina \cite[Proposition 2.18]{Colasuonno-Squassina-2016}. Both spaces $W^{1,\mathcal{H}}(\Omega)$ and $W^{1,\mathcal{H}}_0(\Omega)$ are uniformly convex and so, reflexive Banach spaces. In addition it is known that the embedding
\begin{align}\label{embedding_sobolev}
    W^{1,\mathcal{H}}_0(\Omega) \hookrightarrow \Lp{r}
\end{align}
is compact whenever $r<p^*$, see Colasuonno-Squassina \cite[Proposition 2.15]{Colasuonno-Squassina-2016}, with $p^*$ being the critical exponent to $p$ given by
\begin{align}\label{critical_exponent}
    p^*:=\frac{Np}{N-p},
\end{align}
recall that $1<p<N$.
From \eqref{inequality_lp} we directly obtain that
\begin{align}\label{inequality_w1p}
    \min \left\{\|u\|_{1,\mathcal{H},0}^p,\|u\|_{1,\mathcal{H},0}^q \right\} \leq \|u\|_p^p+\|u\|^q_{q,\mu}
    \leq \max\left \{\|u\|_{1,\mathcal{H},0}^p,\|u\|_{1,\mathcal{H},0}^q\right\}
\end{align}
for all $u\in W^{1,\mathcal{H}}_0(\Omega)$.

Consider the eigenvalue problem for the $r$-Laplacian with homogeneous Dirichlet boundary condition and $1<r<\infty$ defined by
\begin{equation}\label{eigenvalue_problem}
    \begin{aligned}
	-\Delta_r u& =\lambda|u|^{r-2}u\quad && \text{in } \Omega,\\
	u
       & = 0  &&\text{on } \partial \Omega.
    \end{aligned}
\end{equation}

It is known that the first eigenvalue $\lambda_{1,r}$ of \eqref{eigenvalue_problem} is positive, simple, and isolated. Moreover, it can be variationally characterized through
\begin{align}\label{lambda_one}
    \lambda_{1,r} = \inf_{u \in W^{1,r}(\Omega)} \left \{\int_\Omega |\nabla u|^r \,dx : \int_\Omega |u|^r \,dx=1 \right \},
\end{align}
see L{\^e} \cite{Le-2006}.
Let us recall some definitions which we will use later.
\begin{definition}\label{SplusPM}
    Let $X$ be a reflexive Banach space, $X^*$ its dual space and denote by $\langle \cdot,\cdot\rangle$ its duality pairing. Let $A\colon X\to X^*$, then $A$ is called
    \begin{enumerate}[leftmargin=1cm]
	\item[(a)]
	    to satisfy the $(\Ss_+$)-property if $u_n \weak u$ in $X$ and $\limsup_{n\to \infty} \langle Au_n,u_n-u\rangle \leq 0$ imply $u_n\to u$ in $X$;
	 \item[(b)]
	    pseudomonotone if $u_n \weak u$ in $X$ and $\limsup_{n\to \infty} \langle A(u_n),u_n-u\rangle \leq 0$ imply $Au_n \weak u$ and $\langle Au_n,u_n\rangle \to \langle Au,u\rangle$.
    \end{enumerate}
\end{definition}

Our existence result is based on the following surjectivity result for pseudomonotone operators, see, e.\,g.\,Carl-Le-Motreanu \cite[Theorem 2.99]{Carl-Le-Motreanu-2007}.

\begin{theorem}\label{theorem_pseudomonotone}
    Let
$X$ be a real, reflexive Banach space, and let $A\colon X\to X^*$ be a pseudomonotone,
bounded, and coercive operator, and $b\in X^*$. Then a solution of the equation
$Au=b$ exists.
\end{theorem}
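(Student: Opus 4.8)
The plan is to solve $Au=b$ by the classical Galerkin method combined with a Brouwer-type fixed point argument and the defining property of pseudomonotonicity. First I would reduce to the case $b=0$: the operator $u\mapsto Au-b$ is again bounded and pseudomonotone (subtracting a fixed element of $X^*$ alters neither property, since $\langle b,u_n-u\rangle\to0$ whenever $u_n\weak u$), and it stays coercive because $\langle Au-b,u\rangle\geq\langle Au,u\rangle-\|b\|_{X^*}\|u\|$, so after dividing by $\|u\|$ the right-hand side still tends to $+\infty$. Hence it suffices to produce $u\in X$ with $Au=0$.

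Next I would record a preliminary fact that makes the finite-dimensional step work: a bounded pseudomonotone operator is demicontinuous. Indeed, if $v_k\to v$ in $X$, then $v_k\weak v$ and, since $\{Av_k\}$ is bounded, $\langle Av_k,v_k-v\rangle\to0$; pseudomonotonicity then forces $Av_k\weak Av$. Now choose an increasing sequence of finite-dimensional subspaces $X_1\subseteq X_2\subseteq\cdots$ with $\overline{\bigcup_n X_n}=X$ (working in the relevant closed separable subspace, or using a net over all finite-dimensional subspaces in the non-separable case). Writing $i_n\colon X_n\to X$ for the inclusion and $i_n^*\colon X^*\to X_n^*$ for its adjoint, the maps $A_n:=i_n^*\circ A\circ i_n\colon X_n\to X_n^*$ are continuous by demicontinuity and finite-dimensionality. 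Coercivity supplies a radius $R>0$ with $\langle Aw,w\rangle>0$ whenever $\|w\|=R$, hence $\langle A_nw,w\rangle>0$ for $w\in X_n$ with $\|w\|=R$; the standard consequence of Brouwer's theorem then yields $u_n\in X_n$ with $A_nu_n=0$ and $\|u_n\|\leq R$, equivalently $\langle Au_n,v\rangle=0$ for every $v\in X_n$.

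Finally I would pass to the limit. The sequence $\{u_n\}$ is bounded by $R$ and, $A$ being bounded, so is $\{Au_n\}$; by reflexivity I extract subsequences with $u_n\weak u$ in $X$ and $Au_n\weak\xi$ in $X^*$. Fixing $v\in X_m$ and letting $n\to\infty$ along $n\geq m$ in $\langle Au_n,v\rangle=0$ gives $\langle\xi,v\rangle=0$; since $\bigcup_m X_m$ is dense, this yields $\xi=0$. Taking $v=u_n$ gives $\langle Au_n,u_n\rangle=0$, hence
\begin{align*}
    \limsup_{n\to\infty}\langle Au_n,u_n-u\rangle
    =\limsup_{n\to\infty}\bigl(-\langle Au_n,u\rangle\bigr)
    =-\langle\xi,u\rangle=0\leq0 .
\end{align*}
Pseudomonotonicity now applies and gives $Au_n\weak Au$; by uniqueness of the weak limit $Au=\xi=0$, so $u$ solves the reduced equation, and undoing the reduction yields a solution of $Au=b$.

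I expect the main obstacle to be the limit passage rather than the finite-dimensional construction: one must identify the weak limit of $\{Au_n\}$ (the density argument forcing $\xi=0$) precisely so that the sign condition $\limsup\langle Au_n,u_n-u\rangle\leq0$ required by pseudomonotonicity is actually satisfied. A secondary technical point is the demicontinuity lemma, which is what legitimizes applying Brouwer's theorem to $A_n$, together with the choice of approximating subspaces in the non-separable reflexive setting, where the sequence $\{X_n\}$ must be replaced by a net directed by inclusion and the convergences above by subnet convergences.
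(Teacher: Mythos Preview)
Your sketch is essentially the classical Galerkin--Brouwer proof of the Browder surjectivity theorem for pseudomonotone operators, and the outline is correct: the reduction to $b=0$, the demicontinuity lemma (bounded pseudomonotone $\Rightarrow$ demicontinuous), the acute-angle consequence of Brouwer's theorem on each $X_n$, and the identification of the weak limit via density followed by an appeal to the pseudomonotonicity condition all work as you describe. The only genuinely delicate point you flag yourself is the non-separable case; there the phrase ``working in the relevant closed separable subspace'' is too vague to be an argument (one does not know in advance in which separable subspace a solution should live), and one really has to run the construction over the directed family of all finite-dimensional subspaces and extract convergent subnets, exactly as you indicate in your final paragraph.

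There is, however, nothing to compare with: the paper does \emph{not} prove this theorem. It is stated as a tool and attributed to Carl--Le--Motreanu \cite[Theorem~2.99]{Carl-Le-Motreanu-2007}; the authors use it as a black box in the proof of their main existence result (Theorem~3.2). So your write-up supplies a proof where the paper intentionally has none, and the approach you take is precisely the one found in the standard references, including the one the paper cites.
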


Let $A\colon \Wpzero{\mathcal{H}}\to \Wpzero{\mathcal{H}}^*$ be the operator defined by
\begin{align}\label{operator_representation}
    \langle A(u),\ph\rangle_{\mathcal{H}} :=\into \left(|\nabla u|^{p-2}\nabla u+\mu(x)|\nabla u|^{q-2}\nabla u \right)\cdot\nabla\ph \,dx,
\end{align}
where $\langle \cdot,\cdot\rangle_{\mathcal{H}}$ is the duality pairing between $\Wpzero{\mathcal{H}}$ and its dual space $\Wpzero{\mathcal{H}}^*$. The properties of the operator $A\colon \Wpzero{\mathcal{H}}\to \Wpzero{\mathcal{H}}^*$ are summarized in the following proposition, see Liu-Dai \cite{Liu-Dai-2018}.

\begin{proposition}\label{properties_operator_double_phase}
    The operator $A$ defined by \eqref{operator_representation} is bounded, continuous, monotone (hence maximal monotone) and of type $(\Ss_+)$.
\end{proposition}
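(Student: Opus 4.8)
The plan is to write $A=A_p+A_q$ as the sum of the $p$-Laplacian-type term and the weighted $q$-Laplacian-type term,
\[
\langle A_p(u),\ph\rangle = \into |\nabla u|^{p-2}\nabla u\cdot\nabla\ph\,dx, \qquad \langle A_q(u),\ph\rangle = \into \mu(x)|\nabla u|^{q-2}\nabla u\cdot\nabla\ph\,dx,
\]
and to establish each of the four properties for the sum by combining the corresponding (classical) properties of the two pieces. Boundedness is the easiest step: applying Hölder's inequality to each summand — with conjugate exponents $(p',p)$ for $A_p$, and for $A_q$ writing the weight as $\mu=\mu^{1/q'}\mu^{1/q}$ — one bounds $|\langle A(u),\ph\rangle|$ by $\|\nabla u\|_p^{p-1}\|\nabla\ph\|_p+\|\nabla u\|_{q,\mu}^{q-1}\|\nabla\ph\|_{q,\mu}$, and the equivalence \eqref{inequality_w1p} between the modular $\|u\|_p^p+\|u\|_{q,\mu}^q$ and powers of $\|\cdot\|_{1,\mathcal{H},0}$ then shows that $A$ sends bounded sets to bounded sets.

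For continuity I would take $u_n\to u$ in $\Wpzero{\mathcal{H}}$, so that $\nabla u_n\to\nabla u$ in $L^{\mathcal{H}}(\Omega;\R^N)$ and hence in $L^p(\Omega;\R^N)$ and $L^q_\mu(\Omega;\R^N)$. Passing to a subsequence with a common dominating function, the Nemytskii maps $\xi\mapsto|\xi|^{p-2}\xi$ and $\xi\mapsto\mu(x)|\xi|^{q-2}\xi$ converge pointwise a.e., and a Vitali/dominated-convergence argument yields $A(u_n)\to A(u)$ in the dual; the usual subsequence principle then removes the passage to a subsequence. Monotonicity rests on the elementary inequality $(|\xi|^{r-2}\xi-|\eta|^{r-2}\eta)\cdot(\xi-\eta)\geq 0$ for $r>1$, applied with $r=p$ and with $r=q$ (the latter weighted by $\mu(x)\geq 0$); integration gives $\langle A(u)-A(v),u-v\rangle\geq 0$. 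Since $A$ is also continuous, hence hemicontinuous, on the reflexive space $\Wpzero{\mathcal{H}}$, it is automatically maximal monotone.

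The genuine difficulty is the $(\Ss_+)$-property. Assuming $u_n\weak u$ and $\lims\langle A(u_n),u_n-u\rangle\leq 0$, monotonicity gives $\langle A(u_n),u_n-u\rangle\geq\langle A(u),u_n-u\rangle\to 0$, so in fact $\langle A(u_n),u_n-u\rangle\to 0$ and therefore $\langle A(u_n)-A(u),u_n-u\rangle\to 0$. This limit is the sum of the two nonnegative integrals
\begin{align*}
& \into\big(|\nabla u_n|^{p-2}\nabla u_n-|\nabla u|^{p-2}\nabla u\big)\cdot(\nabla u_n-\nabla u)\,dx, \\
& \into\mu(x)\big(|\nabla u_n|^{q-2}\nabla u_n-|\nabla u|^{q-2}\nabla u\big)\cdot(\nabla u_n-\nabla u)\,dx,
\end{align*}
so each of them tends to $0$ separately.

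The crux is to convert this into modular convergence $\into\big(|\nabla u_n-\nabla u|^p+\mu(x)|\nabla u_n-\nabla u|^q\big)\,dx\to 0$, which by the norm–modular relation is exactly $\|u_n-u\|_{1,\mathcal{H},0}\to 0$. Here I expect the main obstacle to lie in the range $1<r<2$ (relevant for whichever of $p,q$ lies below $2$), where the convenient bound $(|\xi|^{r-2}\xi-|\eta|^{r-2}\eta)\cdot(\xi-\eta)\geq c_r|\xi-\eta|^r$ fails and one must use instead Simon's inequality $(|\xi|^{r-2}\xi-|\eta|^{r-2}\eta)\cdot(\xi-\eta)\geq c_r|\xi-\eta|^2(|\xi|+|\eta|)^{r-2}$; recovering $\into|\nabla u_n-\nabla u|^r\,dx\to 0$ from this then requires an auxiliary Hölder estimate (with exponents $2/r$ and $2/(2-r)$) exploiting the boundedness of $\{\nabla u_n\}$ in the respective space, while for $r\geq 2$ the first inequality applies directly. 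Carrying this out for both the $p$-integral in $L^p(\Omega;\R^N)$ and the weighted $q$-integral in $L^q_\mu(\Omega;\R^N)$ yields the desired modular convergence, hence $u_n\to u$ in $\Wpzero{\mathcal{H}}$, completing the $(\Ss_+)$-property.
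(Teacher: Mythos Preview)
The paper does not actually prove this proposition: it simply cites Liu--Dai \cite{Liu-Dai-2018} for the result. Your proposal therefore supplies what the paper omits, and the argument you outline is the standard one and is correct.

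Your decomposition $A=A_p+A_q$, the H\"older/modular estimate for boundedness, the Nemytskii-type argument for continuity, and the pointwise monotonicity inequality are all fine; the deduction of maximal monotonicity from monotonicity plus (hemi)continuity is likewise standard. For the $(\Ss_+)$-property your scheme is exactly the right one: monotonicity upgrades $\limsup\leq 0$ to $\langle A(u_n)-A(u),u_n-u\rangle\to 0$, the two nonnegative summands then vanish separately, and Simon's inequalities convert this into modular convergence of $\nabla u_n$ in $L^p$ and in $L^q_\mu$, hence into norm convergence in $W^{1,\mathcal{H}}_0(\Omega)$ via \eqref{inequality_w1p}. The only care point you correctly flag is the case $r\in(1,2)$, where the H\"older step with exponents $2/r$ and $2/(2-r)$ uses boundedness of $\{\nabla u_n\}$ in the relevant space; for the weighted term you split $\mu=\mu^{r/2}\mu^{(2-r)/2}$ so that the weight distributes properly, and boundedness in $L^q_\mu$ follows from the weak convergence in $W^{1,\mathcal{H}}_0(\Omega)$ together with the norm--modular relation. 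This is precisely the route taken in the cited reference, so your write-up would serve as a self-contained replacement for the citation.
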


A special case of the operator $A$ defined by \eqref{operator_representation} occurs when $\mu \equiv 0$. This leads to the operator $A_p \colon \Wpzero{p} \to \Wpzero{p}^*$ defined by
\begin{align*}
    \left \langle A_p(u),\ph \right \ran_p:= \into |\nabla u|^{p-2}\nabla u\cdot \nabla \ph\, dx,
\end{align*}
where $\langle \cdot,\cdot\rangle_p$ is the duality pairing between $\Wpzero{p}$ and its dual space $\Wpzero{p}^*$. This operator is the well-known $p$-Laplace differential operator.

Another special case happens when $\mu\equiv 1$, that is, $A_{q,p} \colon \Wpzero{q} \to \Wpzero{q}^*$ defined by
\begin{align}\label{operator_representation3}
    \left \langle A_{q,p}(u),\ph \right \ran_{qp}:= \into |\nabla u|^{p-2}\nabla u\cdot \nabla \ph \,dx+\into |\nabla u|^{q-2}\nabla u \cdot \nabla \ph \,dx,
\end{align}
where $\langle \cdot,\cdot\rangle_{qp}$ stands for the duality pairing between $\Wpzero{q}$ and its dual space $\Wpzero{q}^*$, is the  so-called $(q,p)$-Laplace differential operator.

\section{Main results}\label{section_3}

We assume the following hypotheses on the right-hand side nonlinearity $f$.

\begin{enumerate}[leftmargin=1cm]
    \item[(H)]
	$f\colon \Omega \times \R \times \R^N \to \R$ is a Carath\'{e}odory function such that
	\begin{enumerate}[leftmargin=0cm]
	    \item[(i)]
		There exists $\alpha \in \Lp{\frac{q_1}{q_1-1}}$ and $a_1, a_2\geq 0$ such that
		\begin{align}\label{growth_f}
		    |f(x,s,\xi)| & \leq a_1 |\xi|^{p \frac{q_1-1}{q_1}}+a_2|s|^{q_1-1}+\alpha(x)
		\end{align}
		for a.\,a.\,$x\in\Omega$, for all $s\in \R$ and for all $\xi\in\R^N$, where $1<q_1<p^*$ with the critical exponent $p^*$ given in \eqref{critical_exponent}.
	    \item[(ii)]
	      There exists $\omega \in \Lp{1}$ and $b_1,b_2\geq 0$ such that
	      \begin{align}\label{estimate_f}
		  f(x,s,\xi)s & \leq b_1|\xi|^p+b_2|s|^{p}+\omega(x)
	      \end{align}
	      for a.\,a.\,$x\in\Omega$, for all $s\in \R$ and for all $\xi\in\R^N$.
	      Moreover,
	      \begin{align}\label{condition_coeffizients}
		  b_1+b_2\lambda_{1,p}^{-1}< 1,
	      \end{align}
	      where $\lambda_{1,p}$ is the first eigenvalue of the Dirichlet eigenvalue problem for the $p$-Laplacian.
        \end{enumerate}
\end{enumerate}

\begin{example}
    The following function satisfies hypotheses (H), where for simplicity we drop the $x$-dependence,
    \begin{align*}
	f(s,\xi)=-d_1|s|^{q_1-2}s+d_2|\xi|^{p-1}\quad\text{for all } s\in\R \text{ and all }\xi \in\R^N,
    \end{align*}
    with $1<q_1<p^*$, $d_1 \geq 0$ and
    \begin{align*}
	0 \leq d_2< \frac{p}{p-1+\lambda_{1,p}^{-1}}.
    \end{align*}
\end{example}

We say that $u\in \Wpzero{\mathcal{H}}$ is a weak solution of problem \eqref{problem} if it satisfies
\begin{align}\label{weak_solution}
    \into \left(|\nabla u|^{p-2}\nabla u+\mu(x)|\nabla u|^{q-2}\nabla u \right)\cdot\nabla\ph \,dx=\into f(x,u,\nabla u)\ph \,dx
\end{align}
for all test functions $\ph \in \Wpzero{\mathcal{H}}$. Because of the embedding \eqref{embedding_sobolev} and the fact that $p<q$ along with \eqref{inequality_w1p} we easily see that a weak solution in \eqref{weak_solution} is well-defined.

Our main existence result reads as follows.

\begin{theorem}
    Let $1<p<q<N$ and let hypotheses \eqref{condition_poincare} and (H) be satisfied. Then problem \eqref{problem} admits at least one weak solution $u \in \Wpzero{\mathcal{H}}$.
\end{theorem}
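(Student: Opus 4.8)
The plan is to recast the weak formulation \eqref{weak_solution} as a single operator equation in the reflexive Banach space $X:=\Wpzero{\mathcal{H}}$ and then apply the surjectivity result of Theorem \ref{theorem_pseudomonotone}. Alongside the double phase operator $A$ from \eqref{operator_representation} I would introduce the convection operator $\mathcal{N}_f\colon X\to X^*$ given by
\begin{align*}
\langle\mathcal{N}_f(u),\ph\rangle_{\mathcal{H}}:=\into f(x,u,\nabla u)\ph\,dx,
\end{align*}
so that $u\in X$ is a weak solution of \eqref{problem} precisely when $\mathcal{B}(u):=A(u)-\mathcal{N}_f(u)=0$. Choosing $b=0$ in Theorem \ref{theorem_pseudomonotone}, it then suffices to show that $\mathcal{B}$ is bounded, pseudomonotone and coercive.

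First I would verify that $\mathcal{N}_f$ is well defined and bounded. The growth condition \eqref{growth_f} is calibrated so that raising $|f(x,u,\nabla u)|$ to the conjugate power $\frac{q_1}{q_1-1}$ yields the integrable quantities $|\nabla u|^p$, $|u|^{q_1}$ and $|\alpha|^{\frac{q_1}{q_1-1}}$; since $|\nabla u|\in\Lp{p}$ via $L^{\mathcal{H}}(\Omega)\hookrightarrow\Lp{p}$ and $u\in\Lp{q_1}$ with $q_1<p^*$ by \eqref{embedding_sobolev}, the function $f(\cdot,u,\nabla u)$ lies in $\Lp{\frac{q_1}{q_1-1}}$ with norm controlled by $\|u\|_{1,\mathcal{H},0}$. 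Hölder's inequality together with the continuous embedding $X\hookrightarrow\Lp{q_1}$ then gives $|\langle\mathcal{N}_f(u),\ph\rangle_{\mathcal{H}}|\le c\,\|f(\cdot,u,\nabla u)\|_{\frac{q_1}{q_1-1}}\|\ph\|_{1,\mathcal{H},0}$, so $\mathcal{N}_f$ is bounded; combined with the boundedness of $A$ from Proposition \ref{properties_operator_double_phase}, the operator $\mathcal{B}$ is bounded.

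The step I expect to be the main obstacle is pseudomonotonicity, because $\mathcal{N}_f$ depends on $\nabla u$ and is therefore not completely continuous. Here I would exploit the $(\Ss_+)$-property of $A$. Suppose $u_n\weak u$ in $X$ with $\lims\langle\mathcal{B}(u_n),u_n-u\rangle_{\mathcal{H}}\le0$. The compact embedding \eqref{embedding_sobolev} gives $u_n\to u$ in $\Lp{q_1}$, while $\{f(\cdot,u_n,\nabla u_n)\}$ stays bounded in $\Lp{\frac{q_1}{q_1-1}}$ by the estimate of the previous paragraph; Hölder's inequality then forces $\langle\mathcal{N}_f(u_n),u_n-u\rangle_{\mathcal{H}}\to0$. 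Consequently $\lims\langle A(u_n),u_n-u\rangle_{\mathcal{H}}\le0$, and the $(\Ss_+)$-property yields $u_n\to u$ in $X$. Once strong convergence is at hand, the continuity of $A$ and of $\mathcal{N}_f$ gives $\mathcal{B}(u_n)\to\mathcal{B}(u)$ in $X^*$, whence $\mathcal{B}(u_n)\weak\mathcal{B}(u)$ and $\langle\mathcal{B}(u_n),u_n\rangle_{\mathcal{H}}\to\langle\mathcal{B}(u),u\rangle_{\mathcal{H}}$, so $\mathcal{B}$ is pseudomonotone.

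Finally I would establish coercivity, which is where hypothesis (H)(ii) enters. Testing with $u$ and using \eqref{estimate_f},
\begin{align*}
\langle\mathcal{B}(u),u\rangle_{\mathcal{H}}\ge\into|\nabla u|^p\,dx+\into\mu(x)|\nabla u|^q\,dx-b_1\into|\nabla u|^p\,dx-b_2\into|u|^p\,dx-\|\omega\|_1.
\end{align*}
The variational characterization \eqref{lambda_one} gives $\into|u|^p\,dx\le\lambda_{1,p}^{-1}\into|\nabla u|^p\,dx$, so that with $C:=1-b_1-b_2\lambda_{1,p}^{-1}$, which is positive by \eqref{condition_coeffizients} and satisfies $C\le1$,
\begin{align*}
\langle\mathcal{B}(u),u\rangle_{\mathcal{H}}
&\ge C\left(\into|\nabla u|^p\,dx+\into\mu(x)|\nabla u|^q\,dx\right)-\|\omega\|_1\\
&\ge C\min\left\{\|u\|_{1,\mathcal{H},0}^p,\|u\|_{1,\mathcal{H},0}^q\right\}-\|\omega\|_1,
\end{align*}
where the last step applies \eqref{inequality_lp} to $|\nabla u|$ together with $\|\nabla u\|_{\mathcal{H}}=\|u\|_{1,\mathcal{H},0}$. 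Since $p,q>1$, the right-hand side tends to $+\infty$ as $\|u\|_{1,\mathcal{H},0}\to\infty$, giving coercivity. An application of Theorem \ref{theorem_pseudomonotone} with $b=0$ then yields $u\in X$ with $\mathcal{B}(u)=0$, that is, a weak solution of \eqref{problem}.
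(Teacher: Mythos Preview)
Your proposal is correct and follows essentially the same approach as the paper: you recast the problem as $\mathcal{B}(u)=A(u)-\mathcal{N}_f(u)=0$, verify boundedness from the growth condition \eqref{growth_f}, obtain pseudomonotonicity by combining the compact embedding \eqref{embedding_sobolev} with the $(\Ss_+)$-property of $A$, and derive coercivity from (H)(ii) together with \eqref{lambda_one} and \eqref{inequality_lp}, exactly as the paper does. Your explicit remark that $C=1-b_1-b_2\lambda_{1,p}^{-1}\le 1$ (needed to pull the constant in front of the $\mu$-term) is a detail the paper leaves implicit.
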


\begin{proof}
    Let $\hat N_f\colon \Wpzero{\mathcal{H}} \subseteq \Lp{q_1}\to \Lp{q_1'}$ be the Nemytskij operator associated to $f$ and let $i^*\colon\Lp{q_1'}\to \Wpzero{\mathcal{H}}^*$ be the adjoint operator of the embedding $i\colon\Wpzero{\mathcal{H}}\to \Lp{q_1}$. For $u \in \Wpzero{\mathcal{H}}$ we define $N_f :=i^*\circ \hat N_f$ and set
    \begin{align}\label{operator}			
	\mathcal{A}(u)=A(u)-N_f(u).
    \end{align}

    From the growth condition on $f$, see \eqref{growth_f}, we easily know that $\mathcal{A}\colon \Wpzero{\mathcal{H}}\to \Wpzero{\mathcal{H}}^*$ maps bounded sets into bounded sets. Let us now prove that $\mathcal{A}$ is pseudomonotone, see Definition \ref{SplusPM}(b). To this end, let $\{u_n\}_{n\geq 1}\subseteq \Wpzero{\mathcal{H}}$ be a sequence such that
    \begin{align}\label{assumption_pseudomonotone}
	u_n \weak u\quad \text{in }\Wpzero{\mathcal{H}}\quad\text{and} \quad \limsup_{n\to \infty} \langle \mathcal{A}(u_n),u_n-u\rangle_{\mathcal{H}} \leq 0.
    \end{align}
    From the compact embedding in \eqref{embedding_sobolev} we obtain that
    \begin{align}\label{a1}
	u_n \to u\quad \text{in }\Lp{q_1}
    \end{align}
    since $q_1<p^*$. Using the strong convergence in $\Lp{q_1}$, see \eqref{a1}, along with H\"older's inequality and the growth condition on $f$ we obtain
    \begin{align*}
	\begin{split}
	    \lim_{n\to \infty} \into f(x,u_n,\nabla u_n) (u_n-u)\, dx&=0.
	\end{split}
    \end{align*}
    Therefore, we can pass to the limit in the weak formulation in \eqref{weak_solution} replacing $u$ by $u_n$ and $\ph$ by $u_n-u$.  This gives
    \begin{align}\label{a4}
	\limsup_{n\to\infty} \langle A(u_n),u_n-u\rangle_{\mathcal{H}}=\limsup_{n\to \infty} \langle \mathcal{A}(u_n),u_n-u\rangle_{\mathcal{H}}\leq 0.
    \end{align}
    From Proposition \ref{properties_operator_double_phase} we know that $A$ fulfills the $(\Ss_+)$-property and so we conclude, in view of \eqref{assumption_pseudomonotone} and \eqref{a4}, that $u_n\to u$ in $\Wpzero{\mathcal{H}}$. Hence, because of the continuity of $\mathcal{A}$, we have that $\mathcal{A}(u_n)\to \mathcal{A}(u)$ in $\Wpzero{\mathcal{H}}^*$ which proves that $\mathcal{A}$ is pseudomonotone.

    Next we show that the operator $\mathcal{A}$ is coercive, that is,
    \begin{align}\label{a5}
	\lim_{\|u\|_{1,\mathcal{H},0}\to \infty} \frac{\langle \mathcal{A}u, u\rangle_{\mathcal{H}}}{\|u\|_{1,\mathcal{H},0}}=+\infty.
    \end{align}
    From the representation of the first eigenvalue of the $p$-Laplacian, see \eqref{lambda_one}, replacing $r$ by $p$, we have the inequality
    \begin{align}\label{estimate_lambda}
	\|u\|^p_p \leq \lambda_{1,p}^{-1}
	\|\nabla u\|_p^p
	\quad\text{for all }u\in \Wpzero{p}.
    \end{align}
    Since $\Wpzero{\mathcal{H}}\subseteq \Wpzero{p}$ and by applying \eqref{estimate_lambda}, \eqref{estimate_f} and \eqref{inequality_lp} we derive
    \begin{align*}
	&\langle \mathcal{A}(u),u\rangle\\
	& = \into \left(|\nabla u|^{p-2}\nabla u+\mu(x)|\nabla u|^{q-2}\nabla u \right)\cdot\nabla u \,dx-\into f(x,u,\nabla u)u \,dx\\
	&\geq \|\nabla u\|_p^p+ \|u\|^q_{q,\mu} -b_1\|\nabla u\|_p^p-b_2\|u\|_p^p-\|\omega\|_1\\
	& \geq \left(1-b_1-b_2\lambda_{1,p}^{-1}\right)\|\nabla u\|_{p}^p+\|u\|^q_{q,\mu}-\|\omega\|_1\\
	& \geq \left(1-b_1-b_2\lambda_{1,p}^{-1}\right) \left(\|\nabla u\|_{p}^p+\|u\|^q_{q,\mu}\right)-\|\omega\|_1\\
	& \geq \left(1-b_1-b_2\lambda_{1,p}^{-1}\right) \min\left\{\|u\|_{1,\mathcal{H},0}^p, \|u\|_{1,\mathcal{H},0}^q\right\}-\|\omega\|_1.
    \end{align*}
    Therefore, since $1<p<q$ and \eqref{condition_coeffizients}, it follows \eqref{a5} and thus, the operator $\mathcal{A}\colon\Wpzero{\mathcal{H}}\to \Wpzero{\mathcal{H}}^*$ is coercive.

    Hence, the operator $\mathcal{A}\colon\Wpzero{\mathcal{H}}\to\Wpzero{\mathcal{H}}^*$ is bounded, pseudomonotone and coercive. Then Theorem \ref{theorem_pseudomonotone} provides $u\in \Wpzero{\mathcal{H}}$ such that $\mathcal{A}(u)=0$. By the definition of $\mathcal{A}$, see \eqref{operator}, the function $u$ turns out to be a weak solution of  problem \eqref{problem} which completes the proof.
\end{proof}

Let us now give sufficient conditions on the perturbation such that problem \eqref{problem} has a unique weak solution. To this end, we need the following stronger conditions on the convection term $f\colon\Omega\times\R\times\R^N\to\R$.
\begin{enumerate}[leftmargin=1cm]
    \item[(U1)]
	There exists $c_1\geq 0$ such that
	\begin{align*}
	    &(f(x,s,\xi)-f(x,t,\xi))(s-t) \leq c_1 |s-t|^2
	\end{align*}
	for a.\,a.\,$x\in\Omega$, for all $s,t \in\R$ and for all $\xi\in \R^N$.
    \item[(U2)]
	There exists $\rho\in \Lp{r'}$ with $1< r'<p^*$ and $c_2\geq 0$ such that $\xi\mapsto f(x,s,\xi)-\rho(x)$ is linear for a.\,a.\,$x\in\Omega$, for all $s\in \R$
	and
	\begin{align*}
	    |f(x,s,\xi)-\rho(x)|\leq c_2|\xi|
	\end{align*}
	for a.\,a.\,$x\in\Omega$, for all $s \in\R$ and for all $\xi\in \R^N$. Moreover,
	\begin{align}\label{condition_coeffizients2}
	    c_1\lambda_{1,2}^{-1}+c_2\lambda_{1,2}^{-\frac{1}{2}}<1,
	\end{align}
	where $\lambda_{1,2}$ is the first eigenvalue of the Dirichlet eigenvalue problem for the Laplace differential operator.
\end{enumerate}

\begin{example}
    The following function satisfies hypotheses (H), (U1) and (U2), where for simplicity we drop the $s$-dependence,
    \begin{align*}
	f(x,\xi)=\sum_{i=1}^N \beta_i \xi_i+\rho(x)\quad\text{for a.\,a.\,} x\in\Omega \text{ and for all }\xi \in\R^N,
    \end{align*}
    with $2=p\leq q_1<2^*$, $\rho \in \Lp{2}$ and
    \begin{align*}
	\|\beta \|_{\R^N}^2<\min\left\{1-\frac{1}{2}\lambda_{1,2}^{-1} \ , \ \lambda_{1,2}\right\}
    \end{align*}
    where $\beta=(\beta_1, \ldots, \beta_N)\in \R^N$.
\end{example}

Our uniqueness result reads as follows.

\begin{theorem}
    Let \eqref{condition_poincare}, (H), (U1), and (U2) be satisfied and let $2=p<q<N$.
    Then, problem \eqref{problem} admits a unique weak solution.
\end{theorem}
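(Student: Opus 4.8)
The existence of a weak solution is already guaranteed by the preceding existence theorem, since hypotheses \eqref{condition_poincare} and (H) are in force; therefore it only remains to establish uniqueness. The plan is to assume that $u_1,u_2\in\Wpzero{\mathcal{H}}$ are two weak solutions of \eqref{problem}, to subtract their weak formulations \eqref{weak_solution} from each other, and to insert the admissible test function $\ph=u_1-u_2\in\Wpzero{\mathcal{H}}$. Since $2=p$, the $p$-Laplacian part of the operator reduces to the Laplacian, so the leading contribution on the operator side is exactly $\|\nabla(u_1-u_2)\|_2^2$.

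First I would dispose of the $q$-term. The integrand
\begin{align*}
    \mu(x)\left(|\nabla u_1|^{q-2}\nabla u_1-|\nabla u_2|^{q-2}\nabla u_2\right)\cdot\nabla(u_1-u_2)
\end{align*}
is nonnegative for a.\,a.\,$x\in\Omega$, because $\mu\geq 0$ and the vector field $\xi\mapsto|\xi|^{q-2}\xi$ is monotone for $q>1$. Hence this term may be dropped from the left-hand side, leaving the basic inequality
\begin{align*}
    \|\nabla(u_1-u_2)\|_2^2\leq\into\big(f(x,u_1,\nabla u_1)-f(x,u_2,\nabla u_2)\big)(u_1-u_2)\,dx.
\end{align*}

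The heart of the argument is the estimate of the right-hand side, where (U1) and (U2) enter. I would split
\begin{align*}
    f(x,u_1,\nabla u_1)-f(x,u_2,\nabla u_2)=\big[f(x,u_1,\nabla u_1)-f(x,u_2,\nabla u_1)\big]+\big[f(x,u_2,\nabla u_1)-f(x,u_2,\nabla u_2)\big].
\end{align*}
The first bracket times $u_1-u_2$ is bounded pointwise by $c_1|u_1-u_2|^2$ through (U1). In the second bracket I would subtract and add $\rho(x)$; since $\xi\mapsto f(x,s,\xi)-\rho(x)$ is linear by (U2), the two copies of $\rho(x)$ cancel and the bracket equals this linear map evaluated at $\nabla u_1-\nabla u_2$, so that its modulus is at most $c_2|\nabla(u_1-u_2)|$, again by (U2). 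Multiplying by $u_1-u_2$ and applying the Cauchy--Schwarz inequality then gives
\begin{align*}
    \into\big(f(x,u_1,\nabla u_1)-f(x,u_2,\nabla u_2)\big)(u_1-u_2)\,dx\leq c_1\|u_1-u_2\|_2^2+c_2\|\nabla(u_1-u_2)\|_2\,\|u_1-u_2\|_2.
\end{align*}

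Finally I would invoke the Poincar\'e inequality \eqref{estimate_lambda} with $r=2$, namely $\|u_1-u_2\|_2\leq\lambda_{1,2}^{-\frac12}\|\nabla(u_1-u_2)\|_2$ (valid since $\Wpzero{\mathcal{H}}\subseteq\Wpzero{2}$ when $p=2$), to convert both terms on the right into multiples of $\|\nabla(u_1-u_2)\|_2^2$, obtaining
\begin{align*}
    \|\nabla(u_1-u_2)\|_2^2\leq\left(c_1\lambda_{1,2}^{-1}+c_2\lambda_{1,2}^{-\frac12}\right)\|\nabla(u_1-u_2)\|_2^2.
\end{align*}
By the smallness condition \eqref{condition_coeffizients2} the factor in brackets is strictly less than $1$, which forces $\|\nabla(u_1-u_2)\|_2=0$ and hence $u_1=u_2$. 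I expect the main obstacle to be the careful exploitation of the convection term's structure: the $s$-dependence must be absorbed by the one-sided Lipschitz bound (U1), while the $\xi$-dependence must be handled through the linearity in (U2), since a merely sublinear gradient dependence would not close the estimate. Dropping the $q$-Laplacian term by monotonicity and restricting to $p=2$ are precisely what render the remaining quadratic estimate tractable.
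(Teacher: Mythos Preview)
Your proof is correct and follows essentially the same route as the paper: subtract the two weak formulations, test with $u_1-u_2$, discard the nonnegative $q$-term by monotonicity, split the convection difference into an $s$-part controlled by (U1) and a $\xi$-part controlled by the linearity and bound in (U2), and close with the Poincar\'e inequality and \eqref{condition_coeffizients2}. The only cosmetic difference is that the paper packages the $\xi$-part via the identity $(u-v)\,L(\nabla(u-v))=L\bigl(\nabla\tfrac12(u-v)^2\bigr)$ before applying the bound in (U2), whereas you bound $|L(\nabla(u_1-u_2))|$ directly and then multiply by $|u_1-u_2|$; both yield the same integrand $c_2|u_1-u_2|\,|\nabla(u_1-u_2)|$.
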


\begin{proof}
    Let $u,v\in \Wpzero{\mathcal{H}}$ be two weak solutions of \eqref{problem}. Taking in both weak formulations the test function $\ph=u-v$ and subtracting these equations result in
    \begin{align}\label{uniqueness_1}
      \begin{split}
	& \into |\nabla (u-v)|^2\,dx
	+ \into \mu(x) \left(|\nabla u|^{q-2}\nabla u-|\nabla v|^{q-2}\nabla u\right)\cdot \nabla (u-v)\,dx\\
	& = \into (f(x,u,\nabla u)-f(x,v,\nabla u))(u-v)\, dx\\
	&\quad + \into (f(x,v,\nabla u)-f(x,v,\nabla v))(u-v)\, dx.
	\end{split}
    \end{align}
    Since the second term on the left-hand side of \eqref{uniqueness_1} is nonnegative, we have the simple estimate
    \begin{align}\label{uniqueness_2}
      \begin{split}
	& \into |\nabla (u-v)|^2\,dx
	+ \into \mu(x) \left(|\nabla u|^{q-2}\nabla u-|\nabla v|^{q-2}\nabla u\right)\cdot \nabla (u-v)\,dx\\
	& \geq \into |\nabla (u-v)|^2 \,dx.
	\end{split}
    \end{align}
    The right-hand side of \eqref{uniqueness_1} can be estimated via (U1), (U2) and H\"older's inequality
    \begin{align}\label{uniqueness_3}
      \begin{split}
	& \into (f(x,u,\nabla u)-f(x,v,\nabla u))(u-v)\, dx\\
	&\quad + \into (f(x,v,\nabla u)-f(x,v,\nabla v))(u-v)\, dx\\
	& \leq  c_1\|u-v\|_2^2 +\into\left(f\left(x,v,\nabla \left(\frac{1}{2}(u-v)^2 \right)\right)-\rho(x) \right)\,dx\\
	&\leq c_1\|u-v\|_2^2+c_2\into |u-v| |\nabla (u-v)|\, dx\\
	& \leq\left(c_1\lambda_{1,2}^{-1}+c_2\lambda_{1,2}^{-\frac{1}{2}}\right)\|\nabla (u-v)\|_{2}^2.
	\end{split}
    \end{align}
    Combining \eqref{uniqueness_1}, \eqref{uniqueness_2} and \eqref{uniqueness_3} gives
    \begin{align}\label{uniqueness_4}
      \begin{split}
	& \|\nabla (u-v)\|_{2}^2=\into |\nabla (u-v)|^2 \,dx
	\leq \left(c_1\lambda_{1,2}^{-1}+c_2\lambda_{1,2}^{-\frac{1}{2}}\right)\|\nabla (u-v)\|_{2}^2.
	\end{split}
    \end{align}
    Then, by \eqref{condition_coeffizients2} and \eqref{uniqueness_4}, we get that $u=v$.
\end{proof}

\section*{Acknowledgment}

The second author thanks the Pedagogical University of Cracow for the kind hospitality during a research stay in April/May 2019.

The first author was supported by the National Science Center of Poland under Project No. 2015/19/B/ST1/01169,
and the H2020-MSCA-RISE-2018 Research and Innovation Staff Exchange Scheme Fellowship within the Project no. 823731CONMECH.


\begin{thebibliography}{99}

\bibitem{Averna-Motreanu-Tornatore-2016}
    D.\,Averna, D.\,Motreanu, E.\,Tornatore,
    {\it Existence and asymptotic properties for quasilinear elliptic equations with gradient dependence},
    Appl.\,Math.\,Lett. {\bf 61} (2016), 102--107.

\bibitem{Baroni-Colombo-Mingione-2015}
    P.\,Baroni, M.\,Colombo, G.\,Mingione,
    {\it Harnack inequalities for double phase functionals},
    Nonlinear Anal.\,{\bf 121} (2015), 206--222.

\bibitem{Baroni-Colombo-Mingione-2016}
    P.\,Baroni, M.\,Colombo, G.\,Mingione,
    {\it Non-autonomous functionals, borderline cases and related function
classes},
    St.\,Petersburg Math.\,J.\,{\bf 27} (2016), 347--379.

\bibitem{Baroni-Kussi-Mingione-2015}
    P.\,Baroni, T.\,Kuusi, G.\,Mingione,
    {\it Borderline gradient continuity of minima},
    J.\,Fixed Point Theory Appl.\,{\bf 15} (2014), no.\,2, 537--575.

\bibitem{Baroni-Colombo-Mingione-2018}
    P.\,Baroni, M.\,Colombo, G.\,Mingione,
    {\it Regularity for general functionals with double phase},
    Calc.\,Var.\,Partial Differential Equations {\bf 57} (2018), no.\,2, Art.\,62, 48 pp.

\bibitem{Bai-Gasinski-Papageorgiou-2018} 
    Y.\,Bai, L.\,Gasi\'nski, N.\,S.\,Papageorgiou, 
    {\it Nonlinear nonhomogeneous Robin problems with dependence on the gradient}, 
    Bound. Value Probl. {\bf 2018} (2018), Art.\,17, 24 pp.

\bibitem{Carl-Le-Motreanu-2007}
    S.\,Carl, V.\,K.\,Le, D.\,Motreanu,
    ``Nonsmooth Variational Problems and Their Inequalities'',
    Springer, New York, 2007.

\bibitem{Colasuonno-Squassina-2016}
    F.\,Colasuonno, M.\,Squassina,
    {\it Eigenvalues for double phase variational integrals},
    Ann.\,Mat. Pura Appl.\,(4) {\bf 195} (2016), no.\,6, 1917--1959.

\bibitem{Colombo-Mingione-2015a}
    M.\,Colombo, G.\,Mingione,
    {Bounded minimisers of double phase variational integrals},
    Arch. Ration.\,Mech.\,Anal.\,{\bf 218} (2015), no.\,1, 219--273.

\bibitem{Colombo-Mingione-2015b}
    M.\,Colombo, G.\,Mingione,
    {\it Regularity for double phase variational problems},
    Arch.\,Ration. Mech.\,Anal.\,{\bf 215} (2015), no.\,2, 443--496.

\bibitem{Cupini-Marcellini-Mascolo-2015}
    G.\,Cupini, P.\,Marcellini, E.\,Mascolo,
    {\it Local boundedness of minimizers with limit growth conditions},
    J.\,Optim.\,Theory Appl.\,{\bf 166} (2015), no.\,1, 1--22.

\bibitem{De-Figueiredo-Girardi-Matzeu-2004}
    D.\,De Figueiredo, M.\,Girardi, M.\,Matzeu,
    {\it Semilinear elliptic equations with dependence on the gradient via mountain-pass techniques},
    Differential Integral Equations {\bf 17} (2004), no.\,1-2,  119--126.

\bibitem{Dupaigne-Ghergu-Radulescu-2007}
    L.\,Dupaigne, M.\,Ghergu, V.\,D.\,R\u{a}dulescu,
    {\it Lane-{E}mden-{F}owler equations with convection and singular potential},
    J.\,Math.\,Pures Appl.\,(9) {\bf 89} (2007), no.\,6, 563--581.

\bibitem{Faraci-Motreanu-Puglisi-2015}
    F.\,Faraci, D.\,Motreanu, D.\,Puglisi,
    {\it Positive solutions of quasi-linear elliptic equations with dependence on the gradient},
    Calc.\,Var.\,Partial Differential Equations {\bf 54} (2015), no.\,1, 525--538.

\bibitem{Faria-Miyagaki-Motreanu-2014a}
    L.\,F.\,O.\,Faria, O.\,H.\,Miyagaki, D.\,Motreanu,
    {\it Comparison and positive solutions for problems with the {$(p,q)$}-{L}aplacian and a convection term},
    Proc.\,Edinb.\,Math.\,Soc.\,(2) {\bf 57} (2014), no.\,3, 687--698.

\bibitem{Faria-Miyagaki-Motreanu-Tanaka-2014b}
    L.\,F.\,O.\,Faria, O.\,H.\,Miyagaki, D.\,Motreanu, M.\,Tanaka,
    {\it Existence results for nonlinear elliptic equations with {L}eray-{L}ions operator and dependence on the gradient},
    Nonlinear Anal.\,{\bf 96} (2014), 154--166.

\bibitem{Gasinski-Papageorgiou-2017}
    L.\,Gasi\'nski, N,\,S.\,Papageorgiou, 
    {\it Positive solutions for nonlinear elliptic problems with dependence on the gradient}, 
    J.\,Differential Equations\,{\bf 263} (2017) 1451--1476.

\bibitem{Le-2006}
    A.\,L{\^e},
    {\it Eigenvalue problems for the {$p$}-{L}aplacian},
    Nonlinear Anal.\,{\bf 64} (2006), no.\,5, 1057--1099.

\bibitem{Liu-Dai-2018}
    W.\,Liu, G.\,Dai,
    {\it Existence and multiplicity results for double phase problem},
    J.\,Differential Equations {\bf 265} (2018), no.\,9, 4311--4334.

\bibitem{Marano-Winkert-2019}
    S.\,A.\,Marano, P.\,Winkert,
    {\it On a quasilinear elliptic problem with convection term and nonlinear boundary condition},
    Nonlinear Anal.\,{\bf 187} (2019), 159--169.

\bibitem{Marcellini-1989}
    P.\,Marcellini,
    {The stored-energy for some discontinuous deformations in nonlinear elasticity}, in ``Partial differential equations and the calculus of variations, {V}ol. {II}'', vol.\,2, 767--786, Birkh\"{a}user Boston, Boston, 1989.

\bibitem{Marcellini-1991}
    P.\,Marcellini,
    {\it Regularity and existence of solutions of elliptic equations with {$p,q$}-growth conditions},
    J.\,Differential Equations {\bf 90} (1991), no.\,1, 1--30.

\bibitem{Motreanu-Motreanu-Moussaoui-2018}
    D.\,Motreanu, V.\,V.\,Motreanu, A.\,Moussaoui,
    {\it Location of nodal solutions for quasilinear elliptic equations with gradient dependence},
    Discrete Contin.\,Dyn.\,Syst.\,Ser.\,S {\bf 11} (2018), no.\,2, 293--307.

\bibitem{Motreanu-Tanaka-2017}
    D.\,Motreanu, M.\,Tanaka,
    {\it Existence of positive solutions for nonlinear elliptic equations with convection terms},
    Nonlinear Anal.\,{\bf 152} (2017), 38--60.

\bibitem{Motreanu-Tornatore-2017}
    D.\,Motreanu, E.\,Tornatore,
    {\it Location of solutions for quasi-linear elliptic equations with general gradient dependence},
    Electron.\,J.\,Qual.\,Theory Differ.\,Equ.\,{\bf 2017}, no.\,10, 1-10.

\bibitem{Motreanu-Winkert-2019}
    D.\,Motreanu, P.\,Winkert,
    {\it Existence and asymptotic properties for quasilinear elliptic equations with gradient dependence},
    Appl.\,Math.\,Lett.\,{\bf 95} (2019), 78--84.

\bibitem{Perera-Squassina-2019}
    K.\,Perera, M.\,Squassina,
    {\it Existence results for double-phase problems via Morse theory},
    Commun.\,Contemp.\,Math.\,{\bf 20} (2018), no.\,2, 1750023, 14 pp.

\bibitem{Ruiz-2004}
    D.\,Ruiz,
    {\it A priori estimates and existence of positive solutions for strongly nonlinear problems},
    J.\,Differential Equations {\bf 199} (2004), no.\,1, 96--114.

\bibitem{Tanaka-2013}
    M.\,Tanaka,
    {\it Existence of a positive solution for quasilinear elliptic equations with nonlinearity including the gradient},
    Bound.\,Value Probl.\,(2013), no.\,173, 1-11.

\bibitem{Zhikov-1986}
    V.\,V.\,Zhikov,
    {\it Averaging of functionals of the calculus of variations and elasticity theory},
    Izv.\,Akad.\,Nauk SSSR Ser.\,Mat.\,{\bf 50} (1986), no.\,4, 675--710.

\bibitem{Zhikov-1995}
    V.\,V.\,Zhikov,
    {\it On {L}avrentiev's phenomenon},
    Russian J.\,Math.\,Phys.\,{\bf 3} (1995), no.\,2, 249--269.

\bibitem{Zhikov-1997}
    V.\,V.\,Zhikov,
    {\it On some variational problems},
    Russian J.\,Math.\,Phys.\,{\bf 5} (1997), no.\,1, 105--116.

\bibitem{Zhikov-Kozlov-Oleinik-1994}
    V.\,V.\,Zhikov, S.\,M.\,Kozlov, O.\,A.\,Ole\u{\i}nik,
    ``Homogenization of Differential Operators and Integral Functionals,
    Springer-Verlag, Berlin, 1994.

\end{thebibliography}
\end{document}